\newtheorem{theorem}{Theorem}
\newtheorem{proposition}[theorem]{Proposition}
\newtheorem{lemma}[theorem]{Lemma}
\renewcommand{\Pr}{\,\mathbb{P}}
\newcommand{\eps}{\varepsilon}
\DeclareMathOperator{\ch}{ch}
\title{List colouring with a bounded palette}
\author{
Marthe Bonamy\thanks{This research was carried out during a visit by this author to Utrecht University. This author is supported by the ANR Grant EGOS (2012-2015) 12 JS02 002 01.} \\
Universit\'e Montpellier 2 -- LIRMM \\
{\tt marthe.bonamy@lirmm.fr}
\and
Ross J. Kang\thanks{This author was supported by a NWO Veni grant (project number 639.031.138).} \\
Radboud University Nijmegen \\
{\tt ross.kang@gmail.com}
}
\begin{document}

\maketitle

\begin{abstract}

Kr\'al' and Sgall (2005) introduced a refinement of list colouring where every colour list must be subset to one predetermined palette of colours. We call this {\em $(k,\ell)$-choosability} when the palette is of size at most $\ell$ and the lists must be of size at least $k$.
They showed that, for any integer $k\ge 2$, there is an integer $C=C(k,2k-1)$, satisfying $C = O(16^{k}\ln k)$ as $k\to \infty$, such that, if a graph is $(k,2k-1)$-choosable, then it is $C$-choosable, and asked if $C$ is required to be exponential in $k$.
We demonstrate it must satisfy $C = \Omega(4^k/\sqrt{k})$.

For an integer $\ell \ge 2k-1$, if $C(k,\ell)$ is the least integer such that a graph is $C(k,\ell)$-choosable if it is $(k,\ell)$-choosable, then we more generally supply a lower bound on $C(k,\ell)$, one that is super-polynomial in $k$ if $\ell = o(k^2/\ln k)$, by relation to an extremal set theoretic property. By the use of containers, we also give upper bounds on $C(k,\ell)$ that improve on earlier bounds if $\ell \ge 2.75 k$.
\end{abstract}


\section{Introduction}
\label{sec:intro}

The classic concept of {\em list colouring}, where an adversary may place individual restrictions on the colours used at each vertex of the graph, was introduced independently by Erd\H{o}s, Rubin and Taylor~\cite{ERT80} and Vizing~\cite{Viz76}.
We consider the ``bounded palette'' refinement of list colouring as defined by Kr\'al' and Sgall~\cite{KrSg05}.
Let $G = (V,E)$ be a simple, undirected graph. 
For any given positive integer $\ell$, we shall refer to $[\ell] = \{1,\dots,\ell\}$ as a {\em palette} (of colours).  Given a positive integer $k\le \ell$, a mapping $L: V\to \binom{[\ell]}{k}$ is called a {\em $(k,\ell)$-list-assignment} of $G$; a colouring $c$ of $V$ is called an {\em $L$-colouring} if $c(v)\in L(v)$ for any $v\in V$. We say $G$ is {\em $(k,\ell)$-choosable} if for any $(k,\ell)$-list-assignment $L$ of $G$ there is a proper $L$-colouring of $G$.  We say $G$ is {\em $k$-choosable} if it is $(k,\ell)$-choosable for any $\ell\ge k$.  The {\em choosability $\ch(G)$} (or {\em choice number} or {\em list chromatic number}) of $G$ is the least $k$ such that $G$ is $k$-choosable.
Note $G$ is properly $k$-colourable if and only if it is $(k,k)$-choosable.

A natural question one may wonder is whether $k$-choosability may be verified merely by establishing $(k,\ell)$-choosability with a large enough choice of $\ell$ as a function of $k$, independent of the given graph\footnote{Kierstead~\cite{Kie00} proved that $G =(V,E)$ is $k$-choosable if it is $(k,|V|)$-choosable.}. 
If true, this would immediately yield for fixed $k$ an algorithm for checking if a given input graph is not $k$-choosable that runs in time that is singly-exponential in  the number of vertices~\cite{Epp10}.
However, this question was answered by Kr\'al' and Sgall mainly in the negative. 
\begin{theorem}[\cite{KrSg05}]\label{thm:KrSg,negative}
For integers $k$ and $\ell$ satisfying $\ell\ge k\ge3$, there is a graph $G_{k,\ell}$ that is $(k,\ell)$-choosable but not $(k,\ell+1)$-choosable.
On the other hand, if a graph is $(2,4)$-choosable, then it is $2$-choosable.
\end{theorem}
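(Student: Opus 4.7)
For the first statement, I would construct $G_{k,\ell}$ explicitly using a bipartite gadget in the spirit of the classical non-$k$-choosable graphs of Erd\H{o}s--Rubin--Taylor. The construction should be engineered so that a specific $(k,\ell+1)$-list-assignment $L^{\ast}$ exploiting all $\ell+1$ colours admits no proper colouring, while every $(k,\ell)$-list-assignment does. A natural candidate is a bipartite graph in which one part has a vertex for each $k$-subset of $[\ell+1]$ (with that subset as its list), and the opposite part has vertices placed so that a conflict is forced precisely when all $\ell+1$ colours appear among the lists; depending on $\ell$, the negative side may need to be enlarged by product-like constructions in order to push the defect beyond the palette size.

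The main obstacle in this first part is the positive direction: one must show that every $(k,\ell)$-list-assignment, not merely those arising as restrictions of bad $(k,\ell+1)$-ones, admits a proper colouring. My plan is to fix any colour $c\in [\ell+1]$ missing from the (necessarily at most $\ell$-coloured) union of lists, then build a proper $L$-colouring by a tailored greedy or counting argument that exploits the absence of $c$. Since different missing colours may require different strategies, the construction has to be symmetric enough under permutations of $[\ell+1]$ that a single argument serves for every choice of $c$.

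For the second statement, I would invoke the Erd\H{o}s--Rubin--Taylor characterization: a graph is $2$-choosable if and only if its \emph{core} (obtained by iteratively removing vertices of degree $\le 1$) is $K_1$, an even cycle $C_{2m}$, or a theta graph $\Theta_{2,2,2m}$ for some $m\ge 1$. If $G$ is not $2$-choosable, its core therefore falls into one of a few explicit ``bad'' families (odd cycle, theta graph $\Theta_{a,b,c}$ with a non-matching parity pattern, and so on), and for each such core the ERT non-choosability proof already furnishes a bad $2$-list-assignment drawing from at most $4$ colours. Extending such an assignment to all of $G$ by giving each degree-$\le 1$ vertex an arbitrary $2$-subset of the same $4$-element palette preserves the obstruction, since those vertices can be coloured greedily last in the reverse of the peeling order. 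The remaining work is a brief case check verifying the $4$-colour bound across all bad core types, the theta cases being the most delicate.
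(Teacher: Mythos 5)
The paper does not actually prove this statement: it is cited from Kr\'al' and Sgall~\cite{KrSg05}, and the only information the paper gives about the original argument is that the graphs $G_{k,\ell}$ have $O(\ell^2)$ vertices and that ``their proof of Theorem~\ref{thm:KrSg,negative} used ideas of precolouring (non)extension.'' There is therefore no in-paper proof to match your proposal against, but it can still be assessed on its own merits and against that description.

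For the first statement your plan is both incomplete and apparently at odds with the cited construction. A bipartite graph with one vertex for each $k$-subset of $[\ell+1]$ in one part has $\binom{\ell+1}{k}=\Theta(\ell^k)$ vertices for fixed $k\ge3$, far larger than $O(\ell^2)$, and the ``opposite part'' is left entirely unspecified. More importantly, you correctly identify the hard direction --- that \emph{every} $(k,\ell)$-list-assignment must be colourable --- and then only gesture at it. The observation that some colour of $[\ell+1]$ is globally unavailable under a $(k,\ell)$-list-assignment is true by definition, but the announced ``tailored greedy or counting argument exploiting the absence of $c$'' is not described, and it is not at all evident that any such argument succeeds for a graph this large and this unspecified; indeed, Proposition~\ref{prop:bipartite,propB} gives $(k,\ell)$-choosability of bipartite graphs only up to size $M(k,\ell)$, a threshold your construction exceeds already when $\ell=2k-1$. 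Nothing in the plan plays the role of the precolouring-extension gadget that the paper attributes to Kr\'al' and Sgall, so this is a genuine gap rather than a presentational difference.

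For the second statement, routing through the Erd\H{o}s--Rubin--Taylor characterisation of $2$-choosable graphs via the core is a sensible strategy, and your reduction step --- a bad $2$-list-assignment on the core lifts to the whole graph by colouring the peeled degree-$\le1$ vertices last --- is correct. But the central quantitative claim, that every core outside $\{K_1\}\cup\{C_{2m}\}\cup\{\Theta_{2,2,2m}\}$ admits a bad $2$-list-assignment drawing on at most $4$ colours, is exactly what has to be proved, and you assert rather than verify it. The case analysis you defer is not trivial: odd cycles need only $2$ colours, but the excluded theta graphs are not determined by parity alone (e.g.\ $\Theta_{3,3,3}$ has all even cycles and is still not $2$-choosable), and cores containing two cycles linked by a path or sharing a vertex require their own assignments. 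Until that check is actually carried out and the $4$-colour bound confirmed in every case, this part too remains a plan rather than a proof.
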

\noindent
The graphs $G_{k,\ell}$ they construct are not too large, having $O(\ell^2)$ vertices. Their proof of Theorem~\ref{thm:KrSg,negative} used ideas of precolouring (non)extension.

Upon learning this, one might wonder if $(k,\ell)$-choosability of a graph at least provides partial evidence of choosability: does it imply the graph is $C$-choosable for some (possibly large) constant $C = C(k,\ell)$?  The positive answer to this second question is the content of the next result, also due to Kr\'al' and Sgall. This was later strengthened by the second author~\cite{Kan13} by a connection with Property~B (also known as weak $2$-colourability of uniform hypergraphs); 
a more precise version is reviewed in Theorem~\ref{thm:mindeg}.

\begin{theorem}[\cite{KrSg05}, cf.~\cite{Kan13}]\label{thm:KrSg,positive}
For integers $k$ and $\ell$ satisfying $k\ge 2$ and $\ell \ge 2k-1$, there is an integer $C = C(k,\ell)$ satisfying $C = O(16^{k}\ln k)$ 
such that, if a graph is $(k,\ell)$-choosable, then it is $C$-choosable.

Moreover, as $k\to\infty$, if for some fixed $b > 2$ we have $\ell \sim b k$, then $C$ may be chosen to satisfy
\begin{align*}
C \le (4(b-2)^{b-2}(b-1)^{2-2b}b^b +o(1))^k.
\end{align*}
\end{theorem}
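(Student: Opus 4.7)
The plan is to prove the contrapositive: assuming $G$ is not $C$-choosable, I would produce a $(k,\ell)$-list-assignment of $G$ with no proper colouring, for $C$ of the claimed size.

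First, I would reduce to the case of high minimum degree via a standard criticality argument: any vertex of degree at most $C-2$ can be list-coloured greedily at the end, so if $G$ is not $C$-choosable then some subgraph $G' \subseteq G$ with $\delta(G') \ge C-1$ is also not $C$-choosable. Exhibiting a $(k,\ell)$-list-assignment of $G'$ without a proper colouring then suffices, since such an assignment extends arbitrarily to $G$. The goal thus becomes: identify a threshold $D(k,\ell)$ such that $\delta(G') \ge D(k,\ell)$ forces $G'$ to admit a $(k,\ell)$-list-assignment without a proper colouring, and then take $C := D(k,\ell)+1$. This is the content alluded to by Theorem~\ref{thm:mindeg}.

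To construct the bad $(k,\ell)$-list-assignment on such $G'$, I would use the probabilistic method. Assign to each $v \in V(G')$ an independent uniformly random list $L(v) \in \binom{[\ell]}{k}$, and show that with positive probability there is no proper $L$-colouring. Such a colouring corresponds to a partition of $V(G')$ into $\ell$ independent sets $S_1,\dots,S_\ell$ satisfying $i \in L(v)$ for each $v \in S_i$; for a fixed partition, this event has probability exactly $(k/\ell)^{|V(G')|}$. A union bound over admissible partitions, combined with an enumeration that exploits $\delta(G') \ge D$ to bound the sizes (and hence the number) of independent sets that can appear as colour classes, yields the threshold $D(k,\ell)$. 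The base of exponentiation $4(b-2)^{b-2}(b-1)^{2-2b}b^b$ in the stated bound arises by optimizing this enumeration as a function of $b = \ell/k$; it equals $16$ at $b=2$, recovering $C = O(16^k \ln k)$ up to the logarithmic factor, which reflects the Erd\H{o}s--Lov\'asz-type threshold for Property~B of $k$-uniform hypergraphs — the combinatorial engine behind the construction.

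The hard part will be the union bound itself: a trivial count of $\ell^{|V(G')|}$ colour-class-partitions is far too lossy for the union bound to close. I expect one has to exploit the high-minimum-degree hypothesis quantitatively, for instance via a layered exposure of colour classes — select $S_1$ to be a maximal independent set, verify that $G' \setminus S_1$ still has large minimum degree, and recurse — combined with sharp upper bounds on the number of independent sets in a graph of given minimum degree (Shearer's entropy lemma or the Kahn/Zhao-type count). Calibrating these estimates so that the exponential base is exactly $4(b-2)^{b-2}(b-1)^{2-2b}b^b$ for all admissible $b \ge 2$ is the quantitative heart of the argument.
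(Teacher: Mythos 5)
Your first reduction---pass to a minimum-degree-critical subgraph and then aim for a threshold $D(k,\ell)$ such that $\delta(G')\ge D$ forces a bad $(k,\ell)$-list-assignment---matches the paper exactly; this is precisely the role Theorem~\ref{thm:mindeg} plays together with the greedy fact that a $d$-degenerate graph is $(d+1)$-choosable. From that point, however, your plan diverges from the paper's and has a genuine gap. You propose assigning lists \emph{uniformly} at random from $\binom{[\ell]}{k}$ and then closing a union bound over all colour-class partitions, controlled by independent-set-counting arguments (Shearer, Kahn/Zhao) exploiting the minimum-degree hypothesis. This is, in essence, the mechanism behind the containers result (Theorem~\ref{thm:containers}/Theorem~\ref{thm:averagedegree} in the paper), not Theorem~\ref{thm:KrSg,positive}, and the paper explicitly remarks that this one-stage random-list route \emph{cannot} yield a finite bound as $\ell\downarrow 2k-1$; it certainly does not recover the Property-B-shaped base $4(b-2)^{b-2}(b-1)^{2-2b}b^{b}$. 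The paper's argument (after Alon~\cite{Alo00}, details in~\cite{Kan13}) is quite different: fix a family $\mathcal F\subseteq\binom{[\ell]}{k}$ of size $M(k,\ell)$ \emph{without Property~B}, and run a \emph{two-stage} random construction. In the first stage, pick a small random vertex set $A$ and assign its vertices random lists from~$\mathcal F$; identify the ``good'' vertices outside $A$ that see, among their neighbours in $A$, every list of~$\mathcal F$. Fix such an $A$. In the second stage, assign random lists from~$\mathcal F$ to the good vertices and show that with positive probability no proper $L$-colouring exists---the violation of Property~B by $\mathcal F$ being the reason any purported colouring must fail on some good vertex. The quantity $M(k,\ell)$ enters as $M(k,\ell)^2\cdot\mathrm{polylog}$, which is where the base $f(b)^2 = 4(b-2)^{b-2}(b-1)^{2-2b}b^b$ comes from via Erd\H{o}s's estimate~\eqref{eqn:erdos}; it is \emph{not} obtained by calibrating an independent-set count.

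Two further technical problems with your sketch: (i) the claim that you can pick a maximal independent set $S_1$, delete it, and retain large minimum degree is false in general (in $K_{D,D}$, deleting one side leaves an edgeless graph), and colour classes need not be maximal anyway; (ii) high minimum degree does not bound the size or the number of independent sets usefully enough for a direct union bound---this is exactly the obstacle that both the two-stage exposure (in the paper's proof) and the container decomposition (in Section~\ref{sec:containers}) are designed to circumvent, and without one of those structural devices the union bound does not close.
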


\noindent
We remark that
\begin{align*}
\lim_{b\downarrow2}4(b-2)^{b-2}(b-1)^{2-2b}b^b = 16 \ \ \text{ and } \ \ \lim_{b\to\infty}4(b-2)^{b-2}(b-1)^{2-2b}b^b = 4.
\end{align*}
Observe that the condition $\ell \ge 2k-1$ cannot be ignored, because every bipartite graph is $(k,2k-2)$-choosable and the class of bipartite graphs has unbounded choosability.
Recently, Alon {\em et al.}~\cite{AKRWZ14+} sharpened the boundary between $\ell=2k-2$ and $\ell=2k-1$ by exhibiting high girth, bipartite, non-$(k,2k-1)$-choosable graphs all proper subgraphs of which have average degree at most $2k-2$.

Theorem~\ref{thm:KrSg,positive} builds upon the relationship between a graph's degeneracy and its choosability. It is easy to see by a greedy argument that, if every subgraph of a graph has a vertex of degree at most $d$, then the graph's choosability is at most $d+1$. Alon~\cite{Alo93,Alo00} showed with probabilistic methodology that a (weak) converse of this statement is also true.
In slightly more detail, Theorem~\ref{thm:KrSg,positive} is proved by a modification of the proof by Alon~\cite{Alo00} that any graph is $O(4^{k}k^4)$-degenerate if it is $(k,k^2)$-choosable.

Our first result uses the containers method to improve upon Theorem~\ref{thm:KrSg,positive}. This method was introduced recently by Saxton and Thomason~\cite{SaTh12,SaTh15}, who sought to more deeply understand the aforementioned relationship between degeneracy and choice number. As part of a broader approach to several important problems in random and extremal graph theory (cf.~also~\cite{BMS15}), they used this method to show that every graph $G$ with average degree $d$ satisfies $\ch(G) \ge (1+o(1))\log_2 d$ as $d\to\infty$, an asymptotically optimal statement. We follow this same approach for bounded palette choosability. Although this does not (yet) yield optimal results in our setting, it gives marked improvements in a large range of choices of $\ell$.

\begin{theorem}\label{thm:containers}
In Theorem~\ref{thm:KrSg,positive}, as $k\to\infty$, if for some fixed $b > 2$ we have $\ell \sim b k$, then $C$ may be chosen to satisfy
\begin{align*}
C \le (2(b-2)^{-1}b+o(1))^k.
\end{align*}
\end{theorem}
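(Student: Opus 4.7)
The plan is to adapt the container-based proof of Saxton and Thomason that $\ch(G)\ge(1+o(1))\log_2 d$ for a graph of average degree $d$ to the bounded-palette setting, with the fixed list size $k$ and palette size $\ell\sim bk$ replacing their choice of a random palette. First, since $(k,\ell)$-choosability is hereditary (any $(k,\ell)$-list-assignment of a subgraph extends arbitrarily to the whole graph), it suffices to exhibit $d_0=(2b/(b-2)+o(1))^k$ such that every graph $G$ with minimum degree at least $d_0$ fails to be $(k,\ell)$-choosable: every subgraph of a $(k,\ell)$-choosable graph is $(k,\ell)$-choosable, whence the graph is $(d_0-1)$-degenerate and thus $d_0$-choosable by greedy colouring.

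Fix such a $G$ on $n$ vertices, sample $L(v)\in\binom{[\ell]}{k}$ independently and uniformly, and aim to prove $\Pr[G\text{ admits a proper }L\text{-colouring}]<1$. Apply the Saxton--Thomason hypergraph container theorem to $G$ (with a parameter $\alpha$ to be chosen) to obtain a family $\mathcal C$ of subsets of $V(G)$ with each $|C|\le\alpha n$ and $\log_2|\mathcal C|\le\tau n$ for some $\tau=\tau(\alpha,d_0)$, such that every independent set of $G$ lies inside some $C\in\mathcal C$. Any proper $L$-colouring then determines a \emph{container profile} $(C_1,\dots,C_\ell)\in\mathcal C^\ell$ with the $i$-th colour class contained in $C_i$; writing $S_v:=\{i:v\in C_i\}$, the event that some colouring realizes the profile forces $L(v)\cap S_v\ne\emptyset$ for every $v$, so by Bonferroni and AM--GM,
\begin{align*}
\Pr\!\left[\,\forall v\colon L(v)\cap S_v\ne\emptyset\,\right]\;\le\;\prod_v\frac{k|S_v|}{\ell}\;\le\;\Bigl(\frac{k}{\ell}\cdot\frac{1}{n}\sum_i|C_i|\Bigr)^{\!n}\;\le\;(\alpha k)^n.
\end{align*}
Union-bounding over the $|\mathcal C|^\ell$ profiles yields $\Pr[G\text{ admits a proper }L\text{-colouring}]\le(2^{\tau\ell}\alpha k)^n$, so it is enough to arrange that $2^{\tau bk}\,\alpha k<1$.

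The main obstacle is the final parameter optimization. Both $\alpha$ and $\tau$ are decreasing functions of $d_0$, linked by an explicit tradeoff in the Saxton--Thomason theorem, and extracting the sharp constant $2b/(b-2)$ requires the quantitative form of that theorem rather than a black-box application. The expected optimum is around $\alpha=\Theta(1/k)$, so that both $\alpha k$ and $2^{\tau bk}$ contribute non-trivial constants to the critical product; solving for the smallest $d_0$ that admits such an $\alpha$ is what yields the claimed exponent. The remaining ingredients — the heredity reduction, the random-list union bound, and the container-profile counting — are routine once the container theorem is in hand.
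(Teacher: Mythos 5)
Your high-level plan (reduce to a minimum/average degree statement via heredity and degeneracy, sample random lists, pass from colourings to container profiles, union bound over profiles) is the same as the paper's, which follows Section~8 of Saxton and Thomason. But there is a genuine quantitative gap that breaks the argument before the ``final parameter optimization'' you flag as the remaining difficulty.

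The per-vertex estimate $\Pr[L(v)\cap S_v\neq\emptyset]\le k|S_v|/\ell$ is a union bound that is only nontrivial when $|S_v|<\ell/k$, and this regime is simply not attainable for graph containers. Independent sets in a graph can have size up to $n/2$ (take $K_{n/2,n/2}$), so every container family for independent sets must include sets of size at least roughly $n/2$; the Saxton--Thomason graph container theorem indeed produces containers of size $(1/2+o(1))n$, not $o(n)$. Hence $\tfrac1n\sum_v|S_v|=\tfrac1n\sum_i|C_i|\approx\ell/2$, and your AM--GM step gives $(\alpha k)^n\ge(k/2)^n\ge 1$, a vacuous bound. Your speculation that the optimum sits at $\alpha=\Theta(1/k)$ is therefore structurally impossible. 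The fix, and the place the claimed constant actually comes from, is to use the exact hypergeometric probability: if $|S_v|\le(1-c)\ell$ then
\begin{align*}
\Pr[L(v)\cap S_v=\emptyset]\;=\;\frac{\binom{\ell-|S_v|}{k}}{\binom{\ell}{k}}\;\ge\;\Bigl(c-\frac{k-1}{\ell}\Bigr)^k,
\end{align*}
so with $c=1/2+o(1)$ and $\ell\sim bk$ the failure probability per vertex is at least $\bigl(\tfrac12-\tfrac1b+o(1)\bigr)^k=\bigl(\tfrac{b-2}{2b}+o(1)\bigr)^k$, whose reciprocal is exactly the bound $(2b/(b-2)+o(1))^k$ in the statement; this is also why the method needs $b>2$ and degrades as $b\downarrow 2$. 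Finally, the paper's Lemma~\ref{lem:containers} is not a flat ``$|C|\le\alpha n$, $\log|\mathcal C|\le\tau n$'' statement but a multi-scale one (a map $g$ assigning a scale $v\in[x,n]$ to each profile, with the size and counting conditions imposed on prefixes $[v]$), because the container theorem used (Theorem~3.7 of Saxton--Thomason) outputs a nested structure with degree-measure bounds rather than a single uniform size bound; you would need that refinement, not just a black-box ``small and few'' containers family, to close the union bound.
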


\noindent
Observe that $\lim_{b\downarrow2}2(b-2)^{-1}b = \infty$ and $\lim_{b\to\infty}2(b-2)^{-1}b = 2$.
Theorem~\ref{thm:containers} improves on Theorem~\ref{thm:KrSg,positive} when $((b-2)(b-1)^{-2}b)^{b-1} > 1/2$ which is roughly when $b$ is at least $2.747655083$.
See Figure~\ref{fig:comparison} for a comparison.

In the light of Theorems~\ref{thm:KrSg,negative} and~\ref{thm:KrSg,positive}, Kr\'al' and Sgall posed two natural follow-up questions.
\begin{enumerate}
\item For each $k$, what is the least $\ell^* = \ell^*(k)$, if it exists, such that every graph is $(k+1)$-choosable if it is $(k,\ell^*)$-choosable?
\item Must the smallest possible choice of $C(k,2k-1)$ in Theorem~\ref{thm:KrSg,positive} grow exponentially in $k$?
\end{enumerate}
Our second result answers the second of these questions in the affirmative and also provides a lower bound on the quantity $\ell^*$ in the first.
It also gives exponential lower bounds on the best possible choice of $C(k,\ell)$ in Theorem~\ref{thm:KrSg,positive} when $\ell = O(k)$.

\begin{theorem}\label{thm:bipartite}
For integers $k$ and $\ell$ satisfying $k\ge 2$ and $\ell \ge 2k-1$, there is a constant $R = R(k,\ell)$ satisfying $R \ge \exp((k-1)^2/\ell)$
such that the complete bipartite graph $K_{R-1,(R-1)^{R-1}}$ is $(k,\ell)$-choosable but not $R$-choosable.

Moreover, as $k\to\infty$, if for some fixed $b > 2$ we have $\ell \sim b k$, then $R$ may be chosen to satisfy
\begin{align*}
R \ge ((b-2)^{b-2}(b-1)^{2-2b}b^b +o(1))^k.
\end{align*}
\end{theorem}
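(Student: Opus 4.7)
My plan has two parts. First, to show $K_{R-1,(R-1)^{R-1}}$ has list chromatic number at least $R$ (i.e., is not $(R-1)$-choosable; this is presumably what ``not $R$-choosable'' in the statement means), I would invoke the classical Erd\H{o}s--Rubin--Taylor construction. Setting $m := R-1$, on the part of size $m$ assign pairwise disjoint size-$m$ lists $L(a_i) = \{c_{i,1},\ldots,c_{i,m}\}$; on the part of size $m^m = (R-1)^{R-1}$, indexed by functions $f\colon[m]\to[m]$, assign $L(v_f) := \{c_{i,f(i)} : i\in[m]\}$. For any transversal $c(a_i) = c_{i,g(i)}$, the vertex $v_g$ then has its list entirely used, so no proper colouring exists.

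Second, and more substantially, I would reduce $(k,\ell)$-choosability to an extremal set theoretic (Tur\'an-type) covering estimate. Define
\[
\tau(k,\ell) := \binom{\ell}{k-1}\Big/\binom{\ell-k}{k-1}.
\]
The observation I will use is the Sch\"onheim-style double count: any family of $k$-subsets of $[\ell]$ covering every $(\ell-k+1)$-subset has size at least $\tau(k,\ell)$, since each $k$-subset sits inside $\binom{\ell-k}{k-1}$ $(\ell-k+1)$-supersets, of which there are $\binom{\ell}{k-1}$ in total. Taking $R := \lfloor\tau(k,\ell)\rfloor$, for any $(k,\ell)$-list-assignment $L$ of the graph the family $\{L(a) : a \in A\}$, of size $\le R-1 < \tau(k,\ell)$, fails to cover, so some $(\ell-k+1)$-subset $S \subseteq [\ell]$ contains no $L(a)$. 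Its complement $T := [\ell]\setminus S$, of size $k-1$, intersects every $L(a)$; set $c(a) \in T \cap L(a)$, and then since $|L(b)| = k > k-1 = |T|$ each $b \in B$ admits a colour in $L(b) \setminus T$, yielding a proper $L$-colouring.

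For the quantitative bound, I would write $\tau(k,\ell) = \prod_{j=0}^{k-2}(\ell-j)/(\ell-k-j)$ and apply $\log(1+x) \geq x/(1+x)$ to each factor, giving $\log\tau(k,\ell) \geq \sum_{j=0}^{k-2} k/(\ell-j) \geq (k-1)^2/\ell$, hence $R \geq \exp((k-1)^2/\ell)$. The claimed asymptotic $R \geq ((b-2)^{b-2}(b-1)^{2-2b}b^b + o(1))^k$ for $\ell \sim bk$ with $b>2$ then drops out of routine Stirling estimates on the two binomial coefficients, the $(k-1)$-versus-$k$ discrepancy being absorbed in the polynomial factors hidden in the $o(1)$. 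The conceptual key, and the reason this approach yields the right asymptotic, is the observation that a transversal of size only $k-1$ trivially avoids containing \emph{any} $k$-subset of $[\ell]$: a single $T$ therefore handles every adversarial choice of $\{L(b) : b\in B\}$ uniformly. The main care is in the Stirling bookkeeping for the sharp asymptotic; the floor in the definition of $R$ is only a minor point to manage for very small $k,\ell$.
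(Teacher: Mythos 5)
Your proposal is correct and takes essentially the same route as the paper. The paper introduces ``Property~K$(k,\ell)$'': a family $\mathcal F\subseteq\binom{[\ell]}{k}$ has it if some $K\in\binom{[\ell]}{k-1}$ meets every member, and $R(k,\ell)$ is the smallest size of a family without the property; your ``some $(\ell-k+1)$-subset $S$ contains no $L(a)$'' with $T=[\ell]\setminus S$ is precisely the complementary reformulation, your double count is just the deterministic phrasing of the paper's one-line union bound giving $R(k,\ell)\ge\binom{\ell}{k-1}/\binom{\ell-k}{k-1}=\frac{\ell!\,(\ell-2k+1)!}{(\ell-k)!\,(\ell-k+1)!}$, and the paper likewise combines this with Proposition~\ref{prop:bipartite,propK} and the classical fact $\ch(K_{m,m^m})>m$. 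One small bookkeeping point: you set $R:=\lfloor\tau(k,\ell)\rfloor$, which can lose the claimed inequality $R\ge\exp((k-1)^2/\ell)$ when $\tau$ is non-integral; taking $R:=\lceil\tau(k,\ell)\rceil$ (or, as the paper does, $R:=R(k,\ell)$, an integer $\ge\tau$) still satisfies $R-1<\tau\le R(k,\ell)$ and cleanly preserves the lower bound, so this is only the minor adjustment you already flagged.
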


\noindent
This improves upon and simplifies Theorem~\ref{thm:KrSg,negative} in certain cases, albeit with a larger graph.
Note that $R(k,\ell)$ is super-polynomial in $k$ if $\ell = o(k^2/\ln k)$, implying the hypothetical $\ell^*$ in the first question above must  be $\Omega(k^2/\ln k)$. 
We will see below that $R(k,2k-1)$ can be chosen as $\binom{2k-1}{k} \sim 4^k/(2\sqrt{\pi k})$ as $k\to\infty$, while $R(k,k^2) = k$.
The definition of $R(k,\ell)$ is based on what we call ``Property~K'', which is related to Property~B mentioned earlier; we do not know if it has been studied before.

\begin{figure}
\centering
\includegraphics[width=0.8\textwidth]{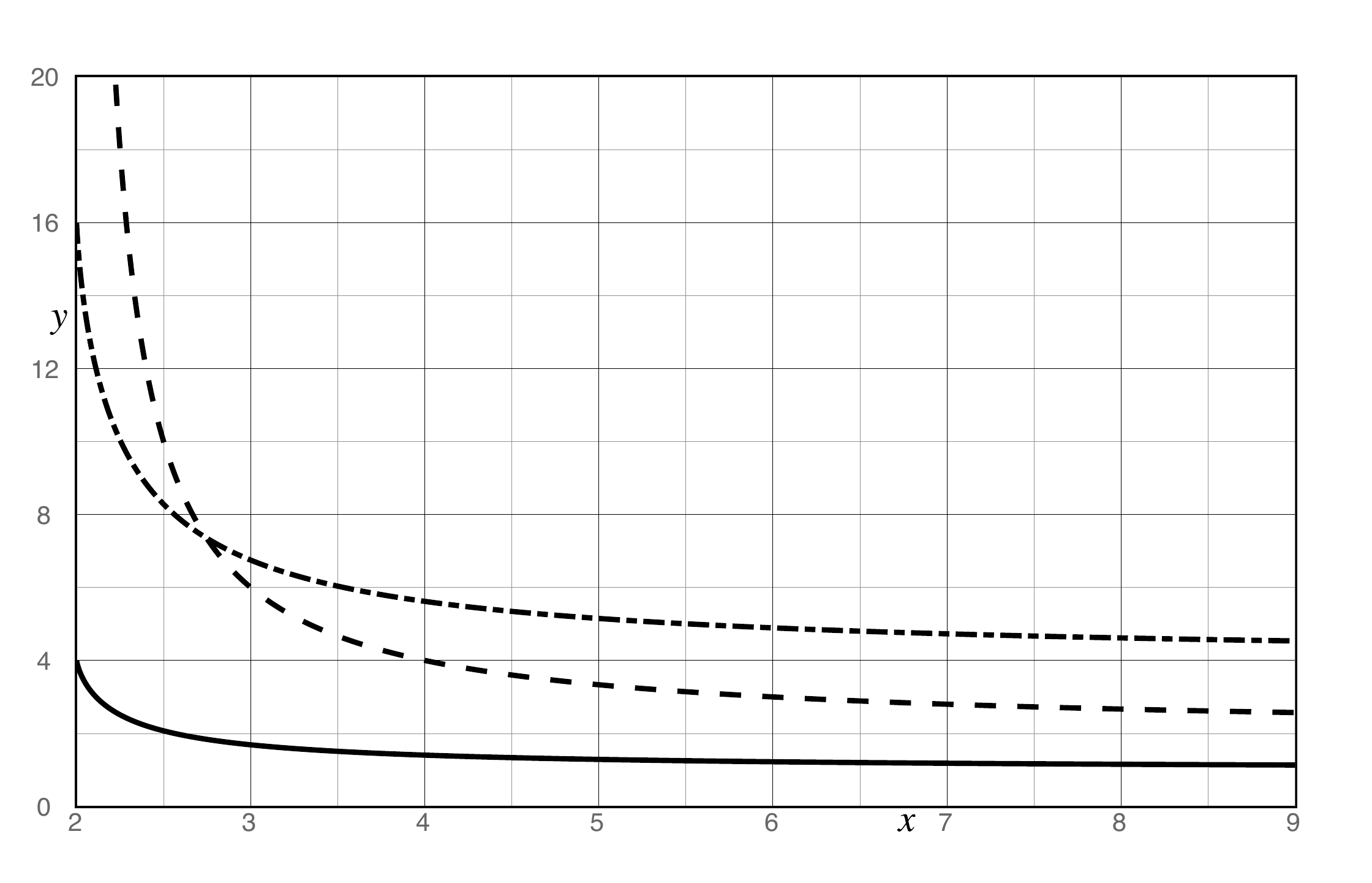}
\caption{A comparison plot. 
The curve $y = 4(x-2)^{x-2}(x-1)^{2-2x}x^x$ (Theorem~\ref{thm:KrSg,positive}) is dash--dotted;
$ y = 2(x-2)^{-1}x$ (Theorem~\ref{thm:containers}) is dashed;
and $y = (x-2)^{x-2}(x-1)^{2-2x}x^x$ (Theorem~\ref{thm:bipartite}) is solid.
\label{fig:comparison}}
\end{figure}

We remark that the following easy proposition settles the case $k=2$ for the first question of Kr\'al' and Sgall.  The proof of this is left to the reader.
\begin{proposition}
If a graph is $(2,3)$-choosable, then it is $3$-choosable.
\end{proposition}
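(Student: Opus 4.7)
The plan is to prove the contrapositive: if $G$ is not $3$-choosable, then $G$ is not $(2,3)$-choosable. First, $(2,3)$-choosability forces $G$ to be bipartite, since the constant assignment $L(v) = \{1,2\}$ is a valid $(2,3)$-list-assignment and a proper $L$-colouring of it is exactly a proper $2$-colouring. So we may assume $G = (A,B)$ is bipartite.

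Suppose now $L$ is a $3$-list-assignment on $G$ admitting no proper $L$-colouring; I aim to produce a $(2,3)$-list-assignment $L'$ also with no proper colouring. The strategy is \emph{palette collapse}: choose a map $\phi \colon U \to \{1,2,3\}$, where $U := \bigcup_{v} L(v)$, satisfying $|\phi(L(v))| \ge 2$ for every vertex $v$, and then set $L'(v) := \phi(L(v))$, trimmed arbitrarily to size $2$ when it has size $3$. The resulting $L'$ is a $(2,3)$-list-assignment. Given any hypothetical proper $L'$-colouring $c' \colon V \to \{1,2,3\}$, one lifts it by choosing $c(v) \in L(v) \cap \phi^{-1}(c'(v))$ (a non-empty set by the defining property of $L'$); for each edge $uv$, the inequality $c'(u) \neq c'(v)$ forces $\phi(c(u)) \neq \phi(c(v))$, so $c(u) \neq c(v)$. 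Thus $c$ would be a proper $L$-colouring, contradicting the choice of $L$; therefore $L'$ is itself bad, witnessing that $G$ is not $(2,3)$-choosable.

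The main obstacle is ensuring that the map $\phi$ exists. The condition $|\phi(L(v))|\ge 2$ for all $v$ is equivalent to $\phi$ being a proper $3$-colouring of the $3$-uniform hypergraph $\mathcal{H}$ on $U$ with edge set $\{L(v) : v \in V(G)\}$, and in general $3$-uniform hypergraphs can have chromatic number exceeding $3$. To get around this, I would first replace $L$ by a \emph{minimal} bad $3$-list-assignment (one from which no colour can be removed from any $L(v)$ without restoring colourability); for such a minimal $L$, the bipartite structure of $G$ together with the hereditary nature of $(2,3)$-choosability should constrain $\mathcal H$ sharply enough---either by bounding $|U|$ directly, or by successively identifying two palette colours and applying a local alteration argument to handle any residual monochromatic lists---for the desired $\phi$ to be exhibited. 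This last step is where I expect the main technical difficulty to lie.
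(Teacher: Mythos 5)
The overall skeleton of your argument is correct, and the "palette collapse'' lemma you isolate is genuinely valid: if $\phi$ exists with $|\phi(L(v))| \geq 2$ for every $v$, then the lifting argument you give does show that the induced $(2,3)$-list-assignment $L'$ has no proper colouring (the trimming to size $2$ causes no trouble, since $L'(v) \subseteq \phi(L(v))$ still guarantees the lift is well-defined). The reduction to bipartite graphs at the start is also correct.

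However, the proof is not complete, and the gap is not merely a loose end: it is the entire content of the problem. You need a map $\phi\colon U\to[3]$ that is non-constant on every triple $L(v)$, which is exactly a weak proper $3$-colouring of the $3$-uniform hypergraph $\mathcal{H} = (U,\{L(v)\})$. As you note yourself, $3$-uniform hypergraphs in general have unbounded weak chromatic number, so such a $\phi$ need not exist. Your suggested remedy---pass to a minimal bad $3$-list-assignment $L$ and exploit bipartiteness and heredity---is only a heuristic, and it is far from clear that it can be made to work. Neither minimality of $L$ (you cannot delete a colour from any list without restoring colourability) nor bipartiteness of $G$ places any visible structural constraint on the overlap pattern of the sets $\{L(v)\}$; the hypergraph $\mathcal{H}$ lives on the colour universe, not on $V(G)$, and its edges can a priori form any $3$-uniform hypergraph. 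Moreover, even if some bad $L$ admits no good $\phi$, the graph might still fail to be $(2,3)$-choosable for reasons invisible to this reduction, so it is not even clear that the palette-collapse route is the right one to pursue. The phrase ``successively identifying two palette colours and applying a local alteration argument'' is not an argument. As written, this is a plausible plan with an unproven central lemma---a lemma which, if true, would essentially be the whole theorem---rather than a proof.
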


The structure of the paper is as follows. In Section~\ref{sec:propB}, we  review the established connections between bounded palette list colouring and Property~B, and sketch the argument behind Theorem~\ref{thm:KrSg,positive}. In Section~\ref{sec:containers}, we indicate the immediate improvement upon Theorem~\ref{thm:KrSg,positive} available by use of the containers method, which establishes Theorem~\ref{thm:containers}. In Section~\ref{sec:propK}, we introduce Property~K and give a proof of Theorem~\ref{thm:bipartite}.

\section{Property~B and $(k,\ell)$-choosability}
\label{sec:propB}

Erd\H{o}s, Rubin and Taylor~\cite{ERT80} already noticed the close connection between choice number and the extremal study of Property~B~\cite{Erd63,Erd64,Erd69,ErHa61}.
In order to provide extra context and background, we here summarise this connection, especially with respect to list colouring with a bounded palette.

A family $\mathcal{F}$ of sets has {\em Property~B} if there exists a set $B$ which meets every set in $\mathcal{F}$ but contains no set in $\mathcal{F}$.
Property~B for a family of $k$-sets is equivalent to weak $2$-colourability of $k$-uniform hypergraphs.

For a fixed integer $k\ge2$, let $M(k)$ be the cardinality of a smallest family of $k$-sets that does not have Property~B.
For fixed integers $k,\ell\ge 2$, let $M(k,\ell)$ be the cardinality of a smallest family of $k$-subsets of $[\ell]$ that does not have Property~B.  Note that $M(k,2k-1)=\binom{2k-1}{k}$ since the collection $\binom{[2k-1]}{k}$ of all $k$-subsets of $[2k-1]$ does not have Property~B, whereas any proper subcollection of $\binom{[2k-1]}{k}$ has Property~B.  It also holds that $M(k,\ell) = \infty$ if $\ell \le 2k-2$, as every subcollection of $\binom{[2k-2]}{k}$ has Property~B.  Clearly, $M(k) = \inf_{\ell\ge 2k-1}M(k,\ell)$.

The best general upper bound on $M(k)$ is a probabilistic construction of Erd\H{o}s~\cite{Erd64} from the 1960's, while the best lower bound is a more recent application of the semirandom method by Radhakrishnan and Srinivasan~\cite{RaSr00} (a short proof of which was obtained recently by Cherkashin and Kozik~\cite{ChKo15}):
\begin{align}
\label{eqn:propB}
\Omega\left(2^k\sqrt{\frac{k}{\ln k}} \right)
\le
M(k)
\le
O\left(2^k k^2\right).
\end{align}
More tailored bounds on $M(k,\ell)$ were shown by Erd\H{o}s~\cite{Erd69}: there is some algebraic decreasing function $f : [2,\infty) \to \mathbb R$ satisfying $\lim_{b\downarrow2} f(b) = 4$ and $\lim_{b\to\infty} f(b) = 2$ such that,
if $\ell \ge 2k-1$ and $\ell \sim bk$ as $k\to\infty$, then $M(k,\ell) = (f(b)+o(1))^k$.
More fully,
\begin{align}\label{eqn:erdos}
f(b) = 2(b-2)^{\frac12(b-2)}(b-1)^{1-b}b^{\frac12b}
\end{align}

We next state the connections between the parameters $M(k,\ell)$ and $(k,\ell)$-choosability.  We first note the following easy proposition which can be derived quite naturally from the definition of $M(k,\ell)$.

\begin{proposition}[\cite{ERT80}]
\label{prop:bipartite,propB}
Let $k,\ell$ be integers such that $2\le k\le \ell$.
\begin{enumerate}
\item
If $n_1\ge M(k,\ell)$ and $n_2\ge M(k,\ell)$, then the complete bipartite graph $K_{n_1,n_2}$ is not $(k,\ell)$-choosable.
\item
Any bipartite graph with fewer than $M(k,\ell)$ vertices is $(k,\ell)$-choosable.
\end{enumerate}
\end{proposition}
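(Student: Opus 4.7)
The proof is a direct dictionary translation of Property~B into the bipartite list colouring setting: a proper colouring of a bipartite graph splits the palette into a ``hitting'' part used on one side and a complementary ``avoiding'' part used on the other, which matches the defining witness set in Property~B together with its complement in $[\ell]$.

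For part~(1), the plan is to start from a family $\mathcal{F}=\{F_1,\dots,F_m\}$ of $k$-subsets of $[\ell]$ with $m=M(k,\ell)$ that fails Property~B. I would assign lists to $K_{n_1,n_2}$ so that each set $F_i$ appears as the list of at least one vertex on each side; this is feasible because both parts have size at least $m$. Given any proper $L$-colouring $c$, let $S$ be the set of colours that $c$ uses on side~$1$. Each $F_i$ occurs as some list on side~$1$, so $S\cap F_i\neq\emptyset$; and each $F_i$ also occurs as a list on side~$2$, where the corresponding vertex must receive a colour from $F_i\setminus S$, forcing $F_i\not\subseteq S$. Hence $S$ would exhibit $\mathcal{F}$ as having Property~B, contradicting the choice of $\mathcal{F}$; therefore no proper $L$-colouring exists.

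For part~(2), I would take a bipartite graph $G$ with parts $A,B$ satisfying $|V(G)|<M(k,\ell)$, together with an arbitrary $(k,\ell)$-list-assignment $L$. Since the family $\mathcal{L}=\{L(v):v\in V(G)\}$ has cardinality at most $|V(G)|<M(k,\ell)$, by minimality of $M(k,\ell)$ the family $\mathcal{L}$ has Property~B, witnessed by some $S\subseteq[\ell]$. I would then colour every $v\in A$ with an arbitrary element of $L(v)\cap S$, which is non-empty because $S$ meets every list, and every $u\in B$ with an arbitrary element of $L(u)\setminus S$, which is non-empty because $L(u)\not\subseteq S$. The resulting colouring is an $L$-colouring, and it is proper since the colours on $A$ lie in $S$ while those on $B$ lie in $[\ell]\setminus S$.

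Both directions are routine once the correspondence is set up, and there is essentially no obstacle beyond keeping track of which side uses $S$ and which uses its complement. The only mild care needed is in part~(1), where one must ensure each element of $\mathcal{F}$ genuinely appears as a list on each side of the bipartition, which is exactly what the hypothesis $n_1,n_2\ge M(k,\ell)$ guarantees.
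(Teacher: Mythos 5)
Your proof is correct and is precisely the ``natural derivation from the definition of $M(k,\ell)$'' that the paper alludes to without spelling out (it cites Erd\H{o}s, Rubin and Taylor and calls the proposition easy): Property~B's witness set $S$ corresponds to the split of colours between the two sides of the bipartition, completeness of $K_{n_1,n_2}$ forces disjointness of the colour sets used on the two sides in part~(1), and the converse assignment in part~(2) is automatically proper for any bipartite graph. No gaps; this matches the intended argument.
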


The next result (given in a slightly more general form in~\cite{Kan13}) extends a $(k,2k-1)$-choosability version due to Kr\'al' and Sgall.

\begin{theorem}[\cite{KrSg05}, cf.~\cite{Kan13}]
\label{thm:mindeg}
Let $k,\ell$ be integers such that $2\le k\le \ell$ and
\begin{align*}
D = 12 (M(k,\ell))^2 \cdot \ln M(k,\ell) \cdot \ln k \cdot
\left(1+\sqrt{1+\frac{1}{3\ln M(k,\ell)}}\right)^2.
\end{align*}
Any graph with minimum degree $D$ is not $(k,\ell)$-choosable.
\end{theorem}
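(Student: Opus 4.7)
My approach is to adapt the classical probabilistic argument of Alon showing that large minimum degree forces large choosability, now with lists drawn from a minimum non-Property-B family of $k$-subsets of $[\ell]$. Let $m = M(k,\ell)$, which is finite by the hypothesis $\ell \ge 2k-1$, and fix a family $\mathcal{F} = \{F_1,\ldots,F_m\} \subseteq \binom{[\ell]}{k}$ of minimum size without Property~B. Given $G$ of minimum degree at least $D$, I would define a random $(k,\ell)$-list-assignment $L$ by independently setting $L(v) = F_{\pi(v)}$ for each $v \in V(G)$, with $\pi(v)$ uniform on $[m]$. The plan is then to show that with positive probability $L$ admits no proper colouring, which immediately witnesses that $G$ is not $(k,\ell)$-choosable.

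The heart of the argument is analysing, at each vertex $v$, the event that $v$ becomes ``stuck''. For any attempted colouring $\phi$ of $N(v)$ that is consistent with the random lists assigned there, $v$ has no available colour precisely when $L(v) \subseteq \phi(N(v))$. Here the non-Property-B structure of $\mathcal{F}$ is essential: once $B := \phi(N(v))$ is large enough to contain some $F_i \in \mathcal{F}$, the independence of $\pi(v)$ from $\phi$ gives a nontrivial probability that $L(v) = F_{\pi(v)} \subseteq B$. A Chernoff-type concentration estimate would then show that, provided $\deg(v) \ge D$ grows roughly like $m \ln m$, the set $B$ covers essentially all of $\mathcal{F}$ with overwhelming probability, so that the probability of $v$ being properly colourable under $\phi$ is at most of order $1/m^2$.

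The remaining step is to iterate this local estimate via union bounds: over all vertices of $G$ (where the factor $\ln k$ tracks the number of ways a colour may be chosen from each list) and over configurations $\phi$ restricted to each $N(v)$ (contributing a $\ln m$ factor). Optimising the Chernoff exponent in the local step is what yields the precise coefficient $(1+\sqrt{1+(3\ln m)^{-1}})^2$, and combining these contributions produces the overall $m^2 \ln m \ln k$ scaling of $D$. The principal obstacle is to balance the probability of local failure (which relies on the full strength of the non-Property-B condition and ultimately scales with $m^2$) against the large union bound over attempted colourings; aligning these two estimates, rather than establishing either in isolation, is what drives the intricate form of the bound.
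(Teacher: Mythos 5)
Your proposal correctly identifies the overall philosophy (random lists drawn from a minimum family $\mathcal F$ without Property~B, exploiting that any transversal of $\mathcal F$ contains a member of $\mathcal F$), but it departs from the paper's approach in a way that leaves a real gap.

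The paper's proof, after Alon, uses \emph{two stages of randomness}: first choose a small random vertex subset $A$ and assign random lists from $\mathcal F$ only to $A$; call a vertex $v\notin A$ \emph{good} if for every $F\in\mathcal F$ some neighbour of $v$ in $A$ has list $F$ (this is where the coupon-collector/Chernoff estimate and the $m\ln m$ threshold per vertex enter). Having fixed $A$, its lists, and a large set of good vertices, one then randomises \emph{only} the lists of the good vertices. The point of this two-stage structure is that the union bound is taken over the $\le k^{|A|}$ colourings of the small set $A$, not over colourings of neighbourhoods or of the whole graph; for each fixed colouring of $A$, each good vertex $v$ sees a transversal $B_v$ of $\mathcal F$ determined by $A$ alone, hence $B_v\supseteq F^*$ for some $F^*\in\mathcal F$, and $v$ is independently stuck with probability at least $1/m$ when $L(v)=F^*$. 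Balancing $k^{|A|}(1-1/m)^t<1$ with $|A|\approx nm\ln m/D$ and $t\approx n/2$ is what produces $D=\Theta(m^2\ln m\ln k)$.

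Your one-stage version assigns random lists to every vertex and then tries to union-bound ``over configurations $\phi$ restricted to each $N(v)$.'' This is where the argument breaks: the number of such configurations is $k^{\deg(v)}\ge k^{D}$, which is astronomically larger than the gain $1/m$ (or even $1/m^2$) per vertex, and the configurations at different vertices are not independent, so one cannot localise the union bound in the way suggested. Without the small ``bottleneck'' set $A$ there is no control over the number of colourings one has to beat, and the calculation does not close. Two smaller issues: the claim that the probability of $v$ being \emph{colourable} is of order $1/m^2$ has no clear derivation (the natural bound is that $v$ is \emph{stuck} with probability at least $1/m$), and the attribution of $\ln k$ to ``all vertices of $G$'' is misplaced --- in the actual proof $\ln k$ enters through $k^{|A|}$, i.e.\ the count of colourings of the small set $A$ only.
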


\noindent
Now, together with the fact that any $d$-degenerate graph is $(d+1)$-choosable,
the first part of Theorem~\ref{thm:KrSg,positive} holds by monotonicity of $M(k,\ell)$ in $\ell$ and the fact that $M(k,2k-1) = \binom{2k-1}{k} = O(4^k/\sqrt{k})$, while the second part is implied by the result of Erd\H{o}s associated to the expression of~\eqref{eqn:erdos}.

As mentioned in the introduction, the proof strategy for Theorem~\ref{thm:mindeg} is after Alon~\cite{Alo93,Alo00}.  It has two stages of randomness. In the first, we choose a small random vertex subset $A$ and assign lists independently and uniformly at random from $\mathcal F$ to the vertices of $A$, where ${\mathcal F} \subseteq\binom{[\ell]}{k}$ is some family not having Property~B. With positive probability, 
there must be a large number of ``good'' vertices, that is, vertices outside of $A$ having for every $F\in {\mathcal F}$ a neighbour in $A$ with list $F$. We fix some such $A$ and its list-assignment. 
In the second stage, we assign lists independently and uniformly at random from $\mathcal F$ to the good vertices, from which we can show that with positive probability no valid list colouring is possible. We refer to~\cite{Kan13} for the details.

\section{Containers and $(k,\ell)$-choosability}
\label{sec:containers}

In this section, we improve upon Theorem~\ref{thm:mindeg} and hence Theorem~\ref{thm:KrSg,positive} when $k$ is large and $\ell \ge 2.75 k$. To do so, we use the recently-introduced containers method. 
We require a more general containers theorem of Saxton and Thomason~\cite{SaTh15} and one of its specific consequences. We remark that, independently, Balogh, Morris and Samotij~\cite{BMS15} obtained a similar theorem with a similar (wide) array of important consequences, except that they did not target list colouring.
List colouring was the original motivation of Saxton and Thomason in formulating the concept of containers in~\cite{SaTh12}.
The following is an analogue of Theorem~2.1 in~\cite{SaTh15}, adapted for $(k,\ell)$-choosability of graphs and reformulated in our notation. 
It implies Theorem~\ref{thm:containers}.

\begin{theorem}\label{thm:averagedegree}
Let $b > 2$.
There is a function $d = d(k)$ satisfying as $k\to \infty$
\begin{align*}
d = (2(b-2)^{-1}b+o(1))^k
\end{align*}
such that any graph with average degree $d$ is not $(k,\lfloor bk\rfloor)$-choosable.
\end{theorem}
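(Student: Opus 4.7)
The plan is to adapt the container-based strategy of Saxton and Thomason~\cite{SaTh15}, which showed $\ch(G) \ge (1+o(1))\log_2 d$ for graphs of average degree $d$, to the bounded palette regime, in place of the two-stage random argument behind Theorem~\ref{thm:mindeg}.

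First, fix $b > 2$ and set $\ell = \lfloor bk\rfloor$. By Erd\H{o}s's result mentioned around~\eqref{eqn:erdos}, for every sufficiently large $k$ there is a family $\mathcal{F}\subseteq\binom{[\ell]}{k}$ of size $m \le (f(b)+o(1))^k$, with $f(b) = 2(b-2)^{(b-2)/2}(b-1)^{1-b}b^{b/2}$, that fails Property~B. This $\mathcal{F}$ is the pool from which we draw random lists, exactly as in the proof of Theorem~\ref{thm:mindeg}.

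Second, given a graph $G$ on $n$ vertices of average degree $d$, I would invoke the graph form of the Saxton--Thomason container theorem to extract a family $\mathcal{C}$ of subsets of $V(G)$ such that (i) every independent set of $G$ lies inside some $C \in \mathcal{C}$, (ii) $\log_2 |\mathcal{C}|$ is small compared with $n$, and (iii) each container is ``almost independent'' in a quantitative sense sharpening with $d$.

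Third, assign lists $L(v)$ independently and uniformly at random from $\mathcal{F}$. A proper $L$-colouring induces an ordered partition of $V(G)$ into independent classes $V_1,\dots,V_\ell$, each contained in some $C_i \in \mathcal{C}$. I would then union-bound over tuples $(C_1,\dots,C_\ell) \in \mathcal{C}^\ell$ and estimate the probability of the joint list event that colour $i$ lies in $L(v)$ for every $v \in C_i$, using the smallness of $|\mathcal{C}|$, the smallness of $m$, and the failure of Property~B in $\mathcal{F}$ to rule out a globally consistent assignment. The conclusion is that for $d \ge (2b/(b-2)+o(1))^k$ the expected number of proper $L$-colourings drops below~$1$, whence some $L$ admits no proper list colouring and $G$ is not $(k,\ell)$-choosable.

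The main obstacle will be calibrating the container theorem precisely so that the trade-off between $|\mathcal{C}|^{\ell}$, the per-container list-probability factors, and $m$ gives exactly the exponent $2b/(b-2)$. A loose application of containers, or simply combining the Saxton--Thomason bound with the fact that $\mathcal{F}$ lies in $\binom{[\ell]}{k}$, would only recover a bound comparable to $f(b)^{2}$ and hence no improvement on Theorem~\ref{thm:KrSg,positive}; squeezing out $2b/(b-2)$ requires that the container defect parameter be chosen as a function of $k$ and $b$ that matches Erd\H{o}s's estimate~\eqref{eqn:erdos} rather than the cruder $\binom{\ell}{k}$.
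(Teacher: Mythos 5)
Your plan diverges from the paper's proof at the very first step, and this is not a stylistic difference but a substantive gap: the containers argument underlying Theorem~\ref{thm:averagedegree} does \emph{not} use a Property~B-failing family at all. In the paper, the random $(k,\ell)$-list-assignment is drawn uniformly from all of $\binom{[\ell]}{k}$, not from some small family $\mathcal F$ that fails Property~B. The exponent $2(b-2)^{-1}b$ has nothing to do with Erd\H{o}s's bound~\eqref{eqn:erdos} on $M(k,\ell)$; it is simply $(\tfrac12 - \tfrac1b)^{-1}$, coming from the probability that a uniformly random $k$-subset of $[\ell]$ avoids a fixed subset of density $1-c$, with $c = \tfrac12 + o(1)$ being the container density parameter for graphs. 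Concretely, $\binom{c\ell}{k}/\binom{\ell}{k} \ge (c - (k-1)/\ell)^k \ge (c - 1/b)^k$, and with $c \approx 1/2$ this gives the exponent. Your closing speculation --- that ``squeezing out $2b/(b-2)$ requires that the container defect parameter be chosen \dots\ to match Erd\H{o}s's estimate'' --- is therefore off the mark: the Property~B estimate plays no role in the containers route, and indeed the two bounds are incomparable (the container bound diverges as $b\downarrow 2$ while $f(b)^2\to 16$, and tends to $2$ rather than $4$ as $b\to\infty$).

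There are also local problems in the probabilistic bookkeeping. The compatibility event you want to control is not ``colour $i$ lies in $L(v)$ for every $v\in C_i$''; rather, $L$ is compatible with a tuple $(C_1,\dots,C_\ell)$ if \emph{for every vertex} $u$ there is \emph{some} $i\in L(u)$ with $u\in C_i$. To show $L$ is incompatible you need a single vertex $u$ with $L(u)\cap\{i : u\in C_i\} = \emptyset$, and the density condition in Lemma~\ref{lem:containers} guarantees many vertices $u$ for which $\{i : u\in C_i\}$ has size at most $(1-c)\ell$, giving each such $u$ a failure probability of roughly $(c-1/b)^k$. One then needs enough independent such trials to beat the union bound over $|\mathcal{C}|^\ell$ tuples. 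Framing the argument as ``expected number of proper $L$-colourings below $1$'' is not the right mechanism here and does not obviously close; and, more seriously, if you insist on sampling $L(v)$ from a structured Property~B-failing family $\mathcal F$ (such as $\binom{[2k-1]}{k}$), the event ``$L(u)$ avoids a fixed $(1-c)\ell$-set'' can have probability zero, so the key estimate collapses. The fix is to abandon the Property~B family entirely and follow the paper's Lemma~\ref{lem:containers}, which adapts Lemma~8.1 of~\cite{SaTh15} with lists drawn from all of $\binom{[\ell]}{k}$.
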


The idea of the containers method is that in order to get a reasonable understanding of the independent sets of a (hyper)graph (a task that frequently arises in probabilistic and extremal combinatorics), it often suffices to work with some good collection of container vertex subsets. By ``good'', we mean that for each independent set there is some container which has it as a subset, that the number of containers is small (and in particular much smaller than the number of independent sets), and that each container is not large. There is already a rather broad and useful collection of interpretations for ``small'' and ``not large'' for which the following statement holds: every (hyper)graph of average degree $d$ has a good collection of containers~\cite{BMS15,SaTh15}.

Let $G=([n],E)$ be a graph and suppose we generate a $(k,\ell)$-list-assignment $L$ of $G$ by assigning each list uniformly at random from $\binom{[\ell]}{k}$. If there exists a proper $L$-colouring $f$, then for each colour $i\in[\ell]$ the set of vertices $u$ with $f(u)=i$ is an independent set; in particular, there is a collection of independent sets $(I_1,\dots,I_\ell)$ such that $u\in I_{f(u)}$ for all $u\in [n]$.
For {\em any} collection of sets $(C_1,\dots,C_\ell)$, we say $L$ is {\em compatible} with $(C_1,\dots,C_\ell)$ if there is a function $f: [n]\to[\ell]$ such that $f(u)\in L(u)$ and $u\in C_{f(u)}$ for all $u\in [n]$.
If we can find a $(k,\ell)$-list-assignment $L$ that is incompatible with every $(I_1,\dots,I_\ell)\in {\mathcal I}^\ell$ where $\mathcal I$ is the collection of all independent sets, then it will follow that $\ch(G) > k$.
However, this conclusion also follows from finding some $L$ that is incompatible with every $(C_1,\dots,C_\ell) \in {\mathcal C}^\ell$ where $\mathcal C$ is a good collection of containers. That the collection $\mathcal C$ is ``small'' and each container is ``not large'' is essentially what is sufficient to prove the existence of a desired (incompatible) list-assignment $L$ by the probabilistic method.

We have superficially described the approach to proving Theorem~\ref{thm:averagedegree} and point to Section~8 of~\cite{SaTh15} for a better and fuller explanation of the details in the proof of their Theorem~2.1.
Those details are substantial, but there is one main point where we differ, namely, the following $(k,\lfloor bk\rfloor)$-choosability version of Lemma~8.1 in~\cite{SaTh15} --- they had $\ell = \Theta(k^2)$ instead of $\ell = O(k)$.

\begin{lemma}\label{lem:containers}
Let $0 < \eps, c < 1$ and $b > 1/c$. Then there exists $x_0 = x_0(\eps,c,b)$ such that the following holds for all $x> x_0$.

Let $k = \lfloor(1-\eps)\log x/\log(1/(c-1/b))\rfloor$ and let $\ell = \lfloor b k\rfloor$.
Let $n > x$ and let ${\mathcal C} \subseteq 2^{[n]}$.
Suppose that there is a map $g: {\mathcal C}^{\ell} \to [x,n]$ such that
\begin{align*}
\frac{1}{\ell} \sum_{i=1}^{\ell} |C_i\cap [v]| \le (1-c)v
\end{align*}
holds for each $(C_1,\dots,C_\ell)\in {\mathcal C}^\ell$ where $v = g(C_1,\dots,C_\ell)$. Suppose also that
\begin{align*}
|\{(C_1\cap[v],\dots,C_\ell\cap[v]) : g(C_1,\dots,C_\ell) = v\}| \le \exp(v\ell/x)
\end{align*}
holds for all $v\in [n]$. Then there is a $(k,\ell)$-list-assignment that is incompatible with every $(C_1,\dots,C_\ell) \in {\mathcal C}^\ell$.
\end{lemma}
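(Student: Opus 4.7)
The plan is to apply the probabilistic method in the spirit of Saxton and Thomason's Lemma~8.1. I would sample a $(k,\ell)$-list-assignment $L$ by choosing, independently for each $u \in [n]$, the list $L(u)$ uniformly at random from $\binom{[\ell]}{k}$, and show that with positive probability $L$ is incompatible with every $(C_1,\dots,C_\ell) \in \mathcal{C}^\ell$. For a tuple $T = (C_1,\dots,C_\ell)$ with witness $v = g(T)$, set $S_u = \{i : u \in C_i\}$ for $u \in [v]$; compatibility forces $L(u) \cap S_u \neq \emptyset$ for every such $u$, which by independence of the lists has probability
\[
\prod_{u \in [v]} \left(1 - \binom{\ell - |S_u|}{k}\Big/\binom{\ell}{k}\right) \le \exp\left(-\sum_{u \in [v]} \binom{\ell - |S_u|}{k}\Big/\binom{\ell}{k}\right).
\]
This upper bound depends on $T$ only through the restriction $(C_1 \cap [v], \dots, C_\ell \cap [v])$, so the union bound over tuples collapses to a union bound over pairs (witness, restriction), of which there are at most $\exp(v\ell/x)$ per $v$ by hypothesis.

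Next I would lower-bound the sum inside the exponential. Writing $p(s) = \binom{\ell-s}{k}/\binom{\ell}{k}$, a short Pascal identity calculation gives $p(s+1) - 2p(s) + p(s-1) = \binom{\ell-s-1}{k-2}/\binom{\ell}{k} \ge 0$, so $p$ is discretely convex and decreasing on $\{0,\dots,\ell-k\}$. The hypothesis yields $\sum_{u \in [v]} |S_u| = \sum_{i=1}^{\ell} |C_i \cap [v]| \le \ell(1-c)v$, so by Jensen's inequality (applied to a piecewise-linear extension of $p$) followed by monotonicity,
\[
\sum_{u \in [v]} p(|S_u|) \ge v\, p\bigl((1-c)\ell\bigr) \ge v\bigl(c - 1/b - o(1)\bigr)^k,
\]
where the last inequality uses the elementary bound $\binom{m}{k}/\binom{\ell}{k} \ge ((m-k+1)/(\ell-k+1))^k$ with $m \approx c\ell$ and $k/\ell \to 1/b$.

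Combining these gives $\Pr[\exists T \in \mathcal{C}^\ell : L \text{ compatible with } T] \le \sum_{v \ge x} \exp(v\ell/x - v(c-1/b - o(1))^k)$. The prescribed choice $k = \lfloor(1-\eps)\log x/\log(1/(c-1/b))\rfloor$ yields $(c - 1/b)^k \ge x^{\eps - 1}$, which dwarfs $\ell/x = O(\log x/x)$, so for $x > x_0$ the per-$v$ exponent is at most $-v(c-1/b)^k/2$ and the resulting geometric sum falls below $1$, giving the desired $L$. The main obstacle I anticipate is the bookkeeping around the $o(1)$ factor: one must verify that the slack of $x^\eps$ between $(c-1/b)^k$ and $\ell/x$ comfortably absorbs the lower-order errors arising from the floors defining $k$ and $\ell$ and from the bound $\binom{\lfloor c\ell\rfloor}{k}/\binom{\ell}{k} \ge (c-1/b)^k(1-o(1))$. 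This is routine but requires careful choice of $x_0(\eps,c,b)$.
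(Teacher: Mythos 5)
Your reconstruction is correct and follows essentially the same route as the paper intends: the paper's ``proof'' of Lemma~\ref{lem:containers} is only an outline deferring to Lemma~8.1 of Saxton and Thomason~\cite{SaTh15}, and your argument is a faithful expansion of precisely that union-bound-plus-first-moment strategy, including the three key points the paper isolates (the use of $b>1/c$ to keep the base positive, the Jensen/monotonicity step giving a bound of the form $(c-\Theta(1/b))^k$, and the choice of $k$ ensuring $(c-1/b)^k \ge x^{\eps-1}$ so that this term dominates $\ell/x$). The only cosmetic difference is in the elementary binomial lower bound: you use $\binom{m}{k}/\binom{\ell}{k} \ge ((m-k+1)/(\ell-k+1))^k$, while the paper's outline uses the bound $(c-(k-1)/\ell)^k$; both are adequate, and in fact yours gives a slightly stronger base so the $o(1)$ you allow for is more than absorbed once $x>x_0$.
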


\begin{proof}[Proof outline]
The proof closely follows that of Lemma~8.1 in~\cite{SaTh15} after the appropriate substitution of expressions for $k$ and $\ell$ (which are, respectively, $\ell$ and $t$, in their choice of letters).

The condition $b > 1/c$ ensures that $c\ell > k$ so that $(c-(k-1)/\ell)^k \ge (c-1/b)^k$ has a positive base.
The choice of $k$ ensures that $(c-1/b)^k \ge x^{\eps-1}$.
For more details we direct the reader to~\cite{SaTh15}.
\end{proof}

To complete the proof of Theorem~\ref{thm:averagedegree}, we use the same approach as for the $r=2$ case of Theorem~2.1 in~\cite{SaTh15}. Starting with a graph of average degree $d$, we apply a containers theorem, Theorem~3.7 in~\cite{SaTh15}. (It is necessary that Theorem~3.7 as stated in~\cite{SaTh15} is valid for all $t\in\mathbb N$.) Then we feed the resulting collection ${\mathcal C}$ of containers as input to Lemma~\ref{lem:containers}. Note that the choice of parameters will be such that $c = 1/2 + o(1)$ (for graphs), $\eps = o(1)$ and $\log x = (1+o(1))\log d$ as $d\to\infty$. The list-assignment given as output by Lemma~\ref{lem:containers} then certifies that the graph is not $(k,\lfloor bk\rfloor)$-choosable.  We omit the remaining details and refer the reader to~\cite{SaTh15}.

The containers method is powerful, and our goal in this section was only to indicate an immediate improvement with this method in our setting.
It is worth pointing out that the consequences for $C(k,\ell)$ are inferior to those of the previous section when $\ell$ is close to $2k-1$.
In particular, the method used to obtain Theorem~\ref{thm:averagedegree} is insufficient to show that for every $k\ge 2$ there is some $C$ such that any graph is $C$-choosable if it is $(k,2k-1)$-choosable.

\section{Property~K and bipartite $(k,\ell)$-choosability}
\label{sec:propK}

Underlying the magnitude guarantee in Theorem~\ref{thm:bipartite} is the extremal study of another set theoretic property, one which is related to Property~B but which we have not found treated elsewhere in the literature.

For fixed integers $k,\ell\ge 2$, a family $\mathcal{F}\subseteq \binom{[\ell]}{k}$ has {\em Property~K$(k,\ell)$} if there exists a set $K \in \binom{[\ell]}{k-1}$ that intersects every set in $\mathcal{F}$. (The letter K stands for the Dutch word, {\em kleurrijk}.)
We then define $R(k,\ell)$ to be the cardinality of a smallest $\mathcal{F}\subseteq \binom{[\ell]}{k}$ that does not have Property~K$(k,\ell)$.
Clearly, $R(k,\ell)\ge k$ always.
Observe that $R(k,2k-1)=\binom{2k-1}{k}$ since the collection $\binom{[2k-1]}{k}$ of all $k$-subsets of $[2k-1]$ does not have Property~K$(k,2k-1)$, whereas any proper subcollection of $\binom{[2k-1]}{k}$ has Property~K$(k,2k-1)$.  It also holds that $M(k,\ell) = \infty$ if $\ell \le 2k-2$, as then every subcollection of $\binom{[\ell]}{k}$ trivially has Property~K$(k,\ell)$.

Let us now demonstrate the connection between Property~K$(k,\ell)$ and $(k,\ell)$-choosability of bipartite graphs with one part that is small enough.

\begin{proposition}\label{prop:bipartite,propK}
Suppose $G$ is a graph that admits a bipartition $V = A\cup B$ with $|A| < R(k,\ell)$. Then $G$ is $(k,\ell)$-choosable.
\end{proposition}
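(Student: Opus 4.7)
The plan is to exploit the definition of Property~K$(k,\ell)$ directly: since $|A|<R(k,\ell)$, any list-assignment restricted to $A$ gives a family of $k$-subsets of $[\ell]$ whose size is below the Property~K threshold, hence a small ``hitting set'' exists, and this hitting set lets us colour $A$ using few colours so that $B$ still has plenty of choice.

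Concretely, let $L$ be an arbitrary $(k,\ell)$-list-assignment of $G$. The family $\mathcal{F}=\{L(a):a\in A\}\subseteq\binom{[\ell]}{k}$ has cardinality at most $|A|<R(k,\ell)$, so by the definition of $R(k,\ell)$, $\mathcal{F}$ enjoys Property~K$(k,\ell)$. That is, there exists a set $K\in\binom{[\ell]}{k-1}$ such that $L(a)\cap K\neq\emptyset$ for every $a\in A$.

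Now I build a proper $L$-colouring in two steps. First, for each $a\in A$, pick $c(a)$ to be any element of $L(a)\cap K$; this is legitimate because the intersection is nonempty, and we are using only colours drawn from $K$. Second, for each $b\in B$, since $G$ is bipartite the only constraints on $c(b)$ come from neighbours in $A$, and all those neighbours received colours from $K$. Hence the set of colours forbidden for $b$ is contained in $K$, which has size $k-1$, while $|L(b)|=k$. Thus $L(b)\setminus K$ is nonempty, and we may pick $c(b)$ from it. By construction $c(a)\in K$ and $c(b)\notin K$ whenever $a\in A$, $b\in B$ are adjacent, so $c(a)\neq c(b)$, giving a proper $L$-colouring.

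There is no real obstacle here: the statement is essentially the definition of $R(k,\ell)$ translated into a colouring argument, and the bipartiteness plus the $|L(b)|>|K|$ gap does all the work. The only thing worth being careful about is recording that Property~K requires $|K|=k-1$ precisely (not merely $|K|\le k-1$), so that $L(b)\setminus K$ has at least one element for every $b\in B$; this is exactly what makes the argument tight.
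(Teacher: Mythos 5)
Your proof is correct and follows exactly the same argument as the paper's: extract a Property~K hitting set $K$ of size $k-1$ for the lists on $A$, colour $A$ from $K$, and colour $B$ from $L(b)\setminus K$, which is nonempty since $|L(b)|=k>|K|$. The closing observation about the tightness of $|K|=k-1$ is a nice touch, but the two proofs are identical in substance.
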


\begin{proof}
Let $L$ be any $(k,\ell)$-list-assignment of the graph $G$. We define an $L$-colouring $c$ of $G$ as follows. Since $|A| < R(k,\ell)$, the family $\{L(u): u\in A\}$ has Property~K$(k,\ell)$, i.e.~there is a set $K \in \binom{[\ell]}{k-1}$ such that $L(u)\cap K \ne \emptyset$ for all $u\in A$.  We set $c(u)$ to be an arbitrary colour of $L(u)\cap K$ for all $u\in A$.  Since $|K| = k - 1$ and the lists all have $k$ colours, we have for any $v \in B$ that $L(v)\setminus K \ne \emptyset$ and we set $c(v)$ to be an arbitrary colour of $L(v)\setminus K$. Clearly, $c$ defines a proper $L$-colouring of $G$, as required.
\end{proof}

As $R(k,2k-1)=\binom{2k-1}{k}$ and it is known that $\ch(K_{m,m^m})>m$ for any $m\ge 1$, we immediately obtain an affirmative answer to the second question of Kr\'al' and Sgall mentioned in the introduction.
Notice that $R(k,\ell) = k$ if $\ell \ge k^2$, by taking $\mathcal{F}$ to be an arbitrary partition of $[k^2]$ into $k$ $k$-subsets. 
On the other hand, borrowing classic arguments used to analyse Property~B~\cite{Erd63,Erd64}, we derive super-polynomial behaviour for $R(k,\ell)$ when $\ell = o(k^2/\ln k)$. 
More specifically, we have the following. 

\begin{theorem}\label{thm:propKext}
Let $k,\ell$ be integers such that $k\ge 2$ and $\ell\ge 2k-1$. Then
\begin{align*}
\frac{\ell!(\ell-2k+1)!}{(\ell-k)!(\ell-k+1)!} \le R(k,\ell) < \frac{\ell!(\ell-2k+1)!}{(\ell-k)!(\ell-k+1)!}\ln\binom{\ell}{k-1}.
\end{align*}
\end{theorem}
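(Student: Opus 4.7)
The plan is to establish both bounds via the probabilistic method, in parallel with the classical Erd\H{o}s arguments for Property~B~\cite{Erd63,Erd64}. The key observation is that a family $\mathcal{F}\subseteq\binom{[\ell]}{k}$ fails Property~K$(k,\ell)$ precisely when, for every $K\in\binom{[\ell]}{k-1}$, there is some $F\in\mathcal{F}$ with $F\cap K=\emptyset$. So both bounds will reduce to analysing disjointness probabilities between random $k$-sets and random $(k-1)$-sets in $[\ell]$.

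For the lower bound, I would take a uniformly random $K\in\binom{[\ell]}{k-1}$ and apply a union bound over $\mathcal{F}$. For a fixed $F\in\binom{[\ell]}{k}$ the probability that $F\cap K=\emptyset$ equals $\binom{\ell-k}{k-1}/\binom{\ell}{k-1}$, so provided $|\mathcal{F}|\cdot\binom{\ell-k}{k-1}/\binom{\ell}{k-1}<1$ there is positive probability that the chosen $K$ meets every $F\in\mathcal{F}$, which means $\mathcal{F}$ has Property~K$(k,\ell)$. Taking the contrapositive, any $\mathcal{F}$ without Property~K satisfies $|\mathcal{F}|\ge\binom{\ell}{k-1}/\binom{\ell-k}{k-1}$, and a short factorial expansion identifies this ratio with $\ell!(\ell-2k+1)!/((\ell-k)!(\ell-k+1)!)$.

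For the upper bound, I would construct $\mathcal{F}$ at random by taking $N$ independent uniform samples from $\binom{[\ell]}{k}$ and retaining the distinct sets obtained. For a fixed $K\in\binom{[\ell]}{k-1}$ the probability that a single draw is disjoint from $K$ is $p:=\binom{\ell-k+1}{k}/\binom{\ell}{k}$, so the probability that $K$ intersects all $N$ of the drawn sets is at most $(1-p)^N\le e^{-pN}$. A union bound over the $\binom{\ell}{k-1}$ candidate sets $K$ shows the random family has Property~K$(k,\ell)$ with probability at most $\binom{\ell}{k-1}e^{-pN}$, which is strictly less than $1$ as soon as $N>p^{-1}\ln\binom{\ell}{k-1}$. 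By the same factorial manipulation, $p^{-1}=\ell!(\ell-2k+1)!/((\ell-k)!(\ell-k+1)!)$, yielding an $\mathcal{F}$ of cardinality at most $N$ witnessing the claimed upper bound on $R(k,\ell)$.

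I do not foresee any genuine obstacle: this is essentially a transcription of the Erd\H{o}s probabilistic arguments. The only point worth highlighting is that both sides of the inequality arise from dual union-bound computations on the same bipartite incidence structure between $k$-subsets and disjoint $(k-1)$-subsets of $[\ell]$, and the two ratios $\binom{\ell}{k-1}/\binom{\ell-k}{k-1}$ and $\binom{\ell}{k}/\binom{\ell-k+1}{k}$ coincide by a symmetric factorial identity. This is what explains why the lower and upper bounds differ only by the logarithmic factor $\ln\binom{\ell}{k-1}$.
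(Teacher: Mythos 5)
Your proposal is correct and takes essentially the same route as the paper: the lower bound via a random $(k-1)$-set $K$ and a union bound over $\mathcal{F}$, and the upper bound via a random family of independent uniform $k$-sets and a union bound over the $\binom{\ell}{k-1}$ candidate sets $K$, with the identical factorial simplifications. Your closing remark about the two ratios $\binom{\ell}{k-1}/\binom{\ell-k}{k-1}$ and $\binom{\ell}{k}/\binom{\ell-k+1}{k}$ coinciding is a nice observation that the paper leaves implicit.
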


\noindent
Note that only the lower bound expression is needed for Theorem~\ref{thm:bipartite} and it is easily seen to be more than $\exp((k-1)^2/\ell)$.
As $k\to \infty$, one can check by Stirling's approximation that, if $\ell \sim b k$ for some fixed $b > 2$, then
\begin{align*}
R(k,\ell) = \left((b-2)^{b-2}(b-1)^{2-2b}b^b+o(1)\right)^k.
\end{align*}
Therefore, together with Proposition~\ref{prop:bipartite,propK} and the fact that $\ch(K_{m,m^m})>m$ for any $m\ge 1$, we conclude that Theorem~\ref{thm:bipartite} holds.

\begin{proof}[Proof of Theorem~\ref{thm:propKext}]
First we prove the lower bound.
Fix a family $\mathcal{F}\subseteq \binom{[\ell]}{k}$ with cardinality less than the leftmost expression. Choose $K \in \binom{[\ell]}{k-1}$ uniformly at random.
For any fixed $F \in \mathcal{F}$, we have
\begin{align*}
\Pr(F \cap K = \emptyset) 
&
 = \frac{\binom{\ell-k}{k-1}}{\binom{\ell}{k-1}}
 = \frac{(\ell-k)!(\ell-k+1)!}{\ell!(\ell-2k+1)!}. 
\end{align*}
By a union bound and the choice of cardinality of $\mathcal{F}$,
\begin{align*}
\Pr(F\cap K = \emptyset\text{ for some }F\in{\mathcal F}) \le
\sum_{F\in\mathcal{F}}\Pr(F \cap K = \emptyset) < 1.
\end{align*}
So with positive probability there is a set $K\in\binom{[\ell]}{k-1}$ certifying that $\mathcal{F}$ has Property~K$(k,\ell)$.

Next we prove the upper bound.
Fix $K \in \binom{[\ell]}{k-1}$. Let $F\in \binom{\ell}{k}$ be a set chosen uniformly at random. Then
\begin{align*}
\Pr(F \cap K = \emptyset) 
& = \frac{\binom{\ell-k+1}{k}}{\binom{\ell}{k}} = \frac{(\ell-k)!(\ell-k+1)!}{\ell!(\ell-2k+1)!}. 
\end{align*}
Let $\mathcal{F} = \{F_1,\dots,F_r\}$ be a family of sets chosen independently and uniformly at random from $\binom{[\ell]}{k}$. Based on the above calculation, we have that
\begin{align*}
\Pr( F_i \cap K \ne \emptyset\text{ for all }i\in\{1,\dots,r\}) \le \left(1-\frac{(\ell-k)!(\ell-k+1)!}{\ell!(\ell-2k+1)!}\right)^r.
\end{align*}
There are $\binom{\ell}{k-1}$ choices for $K$, so we have
\begin{align*}
&\Pr(\mathcal{F}\text{ has no $K$ certifying Property~K$(k,\ell)$}) \\
& \le \binom{\ell}{k-1}\exp\left(-r\frac{(\ell-k)!(\ell-k+1)!}{\ell!(\ell-2k+1)!}\right).
\end{align*}
This last expression is less than $1$ if
\begin{align*}
r > \frac{\ell!(\ell-2k+1)!}{(\ell-k)!(\ell-k+1)!}\ln\binom{\ell}{k-1},
\end{align*}
which establishes the upper bound.
\end{proof}

\begin{table}[!h]
\center
\begin{tabular}{|c||c|c|c|c|c|c|}
  \hline
  $\ell$ & $\leq 4$ & $5$ & $6$ & $7$ & $8$ & $\geq 9$ \\
  \hline
  $R(3,\ell)$ & $+\infty$ & $10$ & $8$ & $5$ & $4$ & $3$ \\
  \hline
\end{tabular}\caption{The complete table of values of $R(3,\ell)$.\label{table}}
\end{table}

\section{Conclusion}
\label{sec:conclusion}

In this work, we have considered the question of list colouring with a bounded palette and illustrated its connection to other parameters and tools in extremal combinatorics.
In particular, we answered the second of the two questions of Kr\'al' and Sgall described in the introduction, by showing that $C(k,2k-1)$ as defined in Theorem~\ref{thm:KrSg,positive} must be $\Omega(4^k/\sqrt{k})$. Moreover, using a connection to Property K, we showed that $C(k,\ell)$ must be super-polynomial in $k$ if $\ell = o(k^2/\ln k)$ and exponential if $\ell = O(k)$.
We also gave better upper bounds on $C(k,\ell)$ for large $k$ and $\ell \ge 2.75 k$ by a direct application of the recently-introduced containers method.

Except for the case $k=2$, the first question of Kr\'al' and Sgall is open.
Reiterating: what is the least $\ell^*$, if it exists, such that every graph is $(k+1)$-choosable if it is $(k,\ell^*)$-choosable?
The probabilistic methods used to prove Theorems~\ref{thm:KrSg,positive} and~\ref{thm:containers} appear too weak to prove the existence of such an $\ell^*$.
Moreover, Theorem~\ref{thm:bipartite} falls well short of refuting the existence of such an $\ell^*$.
To start with the smallest open case, is there an $\ell^*$ such that every $(3,\ell^*)$-choosable graph is $4$-choosable? If so, is the value suggested by Table~\ref{table} optimal, i.e.~is it true that every $(3,9)$-choosable graph is $4$-choosable?

\section*{Acknowledgement}

We thank the referees for their careful reading and helpful comments.

\bibliographystyle{abbrv}
\bibliography{palette}

\end{document}